\newtheorem{theorem}{Theorem}[section]
\newtheorem{lem}[theorem]{Lemma}
\newtheorem{proposition}[theorem]{Proposition}
\newtheorem{cor}[theorem]{Corollary}
\newtheorem{definition}{Definition}
\newtheorem{rem}{Remark}
\title{On point and block primitive designs invariant under permutation groups}
\author{Amin Saeidi\\
\small Department of Mathematics and applied mathematics\\
\small University of Limpopo, South Africa\\
\small \texttt{amin.saeidi@ul.ac.za}
}
\date{}
\begin{document}

\maketitle

\begin{abstract}
In this paper, we present a method for constructing point primitive block transitive $t$-designs invariant under finite groups. Furthermore, we demonstrate that every point and block primitive $G$-invariant design can be generated using this method.

Additionally, we establish the theoretical possibility of identifying all block transitive $G$-invariant designs. However, in practice, the feasibility of enumerating all designs for larger groups may be limited by the computational complexity involved.
\end{abstract}

\section{Introduction}
A very common technique in the construction of $1$-designs invariant under finite groups is using transitive group actions. For instance, in~\cite{nato}, two methods for constructing designs from finite primitive groups have been discussed. The first method involves symmetric designs constructed through primitive permutation representations, while the second method utilizes $G$-conjugacy classes of $G$. Both methods have been widely applied in recent years to construct $1$-designs invariant under finite simple groups, including sporadic simple groups~\cite{{dik},{mr2},{mr4},{ms}} and families of finite simple groups~\cite{{xr2},{mrsz1},{mrsz2},{ms2},{ms1}}. Additionally, Moori introduced a third method to construct designs from the fixed points of a permutation group (see~\cite{m3} and also~\cite{an, s}).

While Key-Moori methods provide valuable tools for creating designs from finite simple groups, the designs constructed by these methods are mostly $1$-designs. To construct $t$-designs for $t \ge 2$, one can consider multiply transitive actions. A well-known technique is due to Kramer and Mesner~\cite{kmo}, where a group of permutations is assumed to be an automorphism group of the design. The goal is to choose a suitable subset of the $k$-element orbits, covering every $t$-element orbit exactly $\lambda$ times. This method has been used for the construction of $t$-designs, as seen in works such as \cite{blw, lw}. Additionally, in \cite{cmot}, the authors investigated $3$-designs from the families of $PSL(2,q)$ and constructed $3$-designs with some restrictions on the sizes of blocks.

This paper aims to present a new method for constructing $G$-invariant point and block primitive designs from finite permutation groups. We demonstrate that this method generalizes Key-Moori Method 1, and we highlight its applications with some examples. Furthermore, we establish that the converse of the method holds; meaning that every $G$-invariant point and block primitive design can be constructed using our method by choosing suitable maximal subgroups.
To illustrate our method, we present the complete list of point and block primitive 2-designs invariant under the Higman-Sims group $HS$ and the Conway group $Co_3$, with respect to their 2-transitive actions. This serves as the main result of this paper.\\

\begin{theorem} \label{method}
Let $G|\Omega$ be a primitive group action of degree $n$, and assume that $M$ is a maximal subgroup of $G$ with $|M| \le |G_\alpha|$. Let $\Delta = \alpha^M$ be an orbit of the action of $M$ on $\Omega$. Then, we can construct a $G$-invariant point and block primitive $1$-design with parameters $(n, |\Delta|, \frac{|G_\alpha| \times |\Delta|}{|M|})$, where the block set is $\{\Delta^g \mid g \in G\}$. Conversely, every $G$-invariant point and block primitive $t$-design can be constructed using this method by possibly by "merging" the blocks as necessary.
\end{theorem}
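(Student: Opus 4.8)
The plan is to handle the two assertions separately: the forward implication is a short orbit‑counting argument, while the converse is the substantive part.

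\emph{Forward implication.} Put $\mathcal{B}:=\{\Delta^g\mid g\in G\}$. By construction this is a single $G$‑orbit on the power set of $\Omega$, so $(\Omega,\mathcal{B})$ is a $G$‑invariant incidence structure and every block has size $|\Delta|$ since $G$ permutes $\Omega$. A primitive action is transitive, so $G$ is transitive on $\Omega$, and hence the number $r$ of blocks through a point does not depend on the point; this makes $(\Omega,\mathcal{B})$ a $1$‑design. To evaluate $r$ I would count incident pairs $(\omega,B)$ with $\omega\in B$ in two ways, giving $|\mathcal{B}|\cdot|\Delta|=n\cdot r$. Now $|\mathcal{B}|=|G:G_\Delta|$ for the setwise stabiliser $G_\Delta$; an $M$‑orbit is $M$‑invariant, so $M\le G_\Delta$, and maximality of $M$ forces $G_\Delta=M$ or $G_\Delta=G$, the latter being impossible once $\Delta\neq\Omega$ because $G$ is transitive. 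Thus $G_\Delta=M$, and substituting $|\mathcal{B}|=|G:M|$, $|\Delta|=|M:M\cap G_\alpha|$ and $n=|G:G_\alpha|$ rearranges the count into $r=|G_\alpha|/|M\cap G_\alpha|=|G_\alpha|\,|\Delta|/|M|$, as claimed. Point primitivity is the hypothesis on $G|\Omega$, and block primitivity holds since the block stabiliser $G_\Delta=M$ is maximal. (The degenerate case $\Delta=\Omega$, i.e.\ $M$ transitive, gives only the trivial one‑block design and is set aside.)

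\emph{Converse.} Let $\mathcal{D}=(\Omega,\mathcal{B})$ be a non‑trivial $G$‑invariant, point‑ and block‑primitive $t$‑design. Point primitivity says $G|\Omega$ is primitive, so $G_\alpha$ is maximal and this is exactly the input action required by the construction. Block primitivity says $G$ is transitive on $\mathcal{B}$ — hence $\mathcal{B}=B^{G}$ for a fixed block $B$ — and that $M:=G_B$ is maximal in $G$. The size constraint of the theorem is automatically satisfied: $|M|\le|G_\alpha|$ is equivalent to $|\mathcal{B}|\ge|\Omega|$, which is Fisher's inequality and holds for every $G$‑invariant $2$‑design (for $t=1$ one has to impose $b\ge v$ as an extra hypothesis). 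Next, decompose $B$ into its $M$‑orbits $B=\Delta_1\sqcup\cdots\sqcup\Delta_s$ and pick $\alpha_i\in\Delta_i$, so $\Delta_i=\alpha_i^{M}$. Each $\Delta_i$ is $M$‑invariant, so $M\le G_{\Delta_i}$; maximality of $M$ together with $\Delta_i\subseteq B\subsetneq\Omega$ (which rules out $G_{\Delta_i}=G$) forces $G_{\Delta_i}=M$. Hence each triple $(M,\Delta_i,\alpha_i)$ is an admissible input, and the forward construction produces point‑ and block‑primitive $1$‑designs $\mathcal{D}_i:=(\Omega,\{\Delta_i^g\mid g\in G\})$.

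It then remains to exhibit $\mathcal{D}$ as the merge of $\mathcal{D}_1,\dots,\mathcal{D}_s$. Since $G_{\Delta_i}=M=G_B$, the assignment $\Delta_i^g\mapsto B^g$ is a well‑defined $G$‑equivariant bijection between the block set of $\mathcal{D}_i$ and $\mathcal{B}$, well‑definedness and injectivity both coming from $\Delta_i^g=\Delta_i^{g'}\iff g(g')^{-1}\in M\iff B^g=B^{g'}$; so both block sets are canonically labelled by the right cosets of $M$. Under this labelling $B^g=\Delta_1^g\cup\cdots\cup\Delta_s^g$ for every $g$, i.e.\ the block of $\mathcal{D}$ indexed by $Mg$ is the union of the correspondingly indexed blocks of the $\mathcal{D}_i$. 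This is precisely the \emph{merging} operation of the statement, and it realises $\mathcal{D}$ through the construction (no merging is needed exactly when $s=1$, i.e.\ when $B$ is a single $G_B$‑orbit). The principal difficulties I anticipate are: (i) the maximality argument pinning down $G_{\Delta_i}=M=G_B$, which is short but is where block primitivity is genuinely used and where the non‑triviality hypothesis $B\neq\Omega$ enters; (ii) the constraint $|M|\le|G_\alpha|$, transparent for $t\ge 2$ via Fisher's inequality but requiring an added assumption for $t=1$; and (iii) making merging precise — fixing the canonical $G$‑equivariant coset labelling of the block sets of the $\mathcal{D}_i$ so that unions of corresponding blocks return $\mathcal{B}$ exactly, including multiplicities if repeated blocks are allowed.
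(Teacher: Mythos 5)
Your proof is correct and follows essentially the same route as the paper: the forward direction via the incidence count $b\,|\Delta| = n\,r$ together with $M\le G_\Delta$ and maximality forcing $G_\Delta=M$ (with the $G_\Delta=G$ case degenerate), and the converse by decomposing a block $B$ into orbits of the maximal subgroup $M=G_B$ and merging the resulting orbit designs. You are in fact slightly more careful than the paper in two places --- explicitly pinning down $G_{\Delta_i}=M$ rather than only $M\le G_{\Delta_i}$, and noting that the hypothesis $|M|\le|G_\alpha|$ is automatic (Fisher's inequality) only for $t\ge 2$ and must be imposed separately for $t=1$, a point the paper's converse does not address.
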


\begin{rem}
If $G$ is $t$-transitive for $t \ge 2$, then the design has a $t$-design structure.
\end{rem}

\begin{rem}

The method can be considered as a generalization of Key-Moori Method 1. In fact, if $G|\Omega$ is primitive and $M$ is a point stabilizer, then $n = |G{:}M|$ and we have a $(n, k, k)$. Also if $t \ge 2$, the design constructed by Key-Moori Method 1 will be trivial while with the generalization, we may actually construct $t$-designs.

\end{rem}

We also aim to provide a description of all point and block transitive designs. We demonstrate that if $G$ is a group acting on a set of size $n$ (for $2 \le n$), the power set $\mathcal{P}(n)$ has a unique partition that can be considered as bases for blocks of a block transitive design. Leveraging this observation, we have computed the set of all block transitive designs invariant under the Mathieu group $M_{22}$. However, in practice, finding all block transitive designs may be challenging due to the computational complexity, especially for larger groups.

  \section{Terminology and notation}

  Our notation for designs follows~\cite{ak}. Let $\mathcal{D} = (\mathcal{P}, \mathcal{B}, \mathcal{I})$ be an incidence structure, defined by a triple with a point set $\mathcal{P}$, a block set $\mathcal{B}$ (disjoint from $\mathcal{P}$), and an incidence set $\mathcal{I} \subseteq \mathcal{P} \times \mathcal{B}$. If the ordered pair $(p, B) \in \mathcal{I}$, we say that $p$ is incident with $B$. It is often convenient to assume that the blocks in $\mathcal{B}$ are subsets of $\mathcal{P}$, so that $(p, B) \in \mathcal{I}$ if and only if $p \in B$.

For a positive integer $t$, we define $\mathcal{D}$ as a $t$-design if every block $B \in \mathcal{B}$ is incident with exactly $k$ points, and every $t$ distinct points are together incident with $\lambda_t$ blocks. In this case, we say $\mathcal{D}$ is a $t$-$(v, k, \lambda_t)$ design, where $v = |\mathcal{P}|$.

    If $\mathcal{D}$ is a $t$-design, then it is also an $s$-$(v, k, \lambda_s)$ design for $1\le s \le t$. We can use the following equation to find $\lambda_s$.

\begin{equation} \label{lamda}
\lambda_s = \lambda_t\frac{\binom{v - s}{t - s}}{   \binom{k - s}{t - s}}.
\end{equation}

The full parameters of a design are typically given by $(t, b, v, k, r, \lambda_t)$, where $b$ is the number of blocks, and $r = \lambda_1$ is the number of blocks incident with a given point. A symmetric design is a $t$-design with the same number of points and blocks, i.e., $v = b$.

An automorphism of a design $\mathcal{D}$ is a permutation of the points and blocks that preserves the incidence relationship. This means that if a point is incident with a block, the images of the point and block under the permutation are also incident. The set of all automorphisms of $\mathcal{D}$ (which is obviously a group) is called the automorphism group of $\mathcal{D}$, denoted by $Aut(\mathcal{D})$.


A $t$-$(v, k, 1)$ design is called a Steiner system (usually for $t \ge 2$) and is denoted by $S(t, k, v)$. A Steiner system is termed non-trivial if $t < k < v$. The proof of existence and uniqueness of Steiner systems is typically challenging, especially for large values of $t$. For instance, there exists only one Steiner system with parameters $S(3, 4, 10)$ (up to isomorphism), while there are 1054163 non-isomorphic Steiner systems with parameters $S(3, 4, 16)$ (see, for example,~\cite{pok} for a reference).

By $G|\Omega$, we mean the group action of $G$ on the set $\Omega$. For any subset $B$ of $\Omega$, the stabilizer of $B$ in $G$ is denoted as $G_B$, given by $G_B = \{g \in G \mid B^g = B\}$. If $B = \{\alpha\}$, the stabilizer $G_B$ corresponds to the stabilizer of the point $\alpha$, denoted as $G_\alpha$.

A transitive action is considered primitive if $G_{\alpha}$ is a maximal subgroup of $G$. If $G$ is a transitive group of degree $n$, then, by Cayley's Theorem, we can assume that $G$ acts on the set $\underline{n} = \{1, 2, ..., n\}$, and $G \le S_n$. So, when we say $G \le S_n$, we mean that we are considering $G$ as a group acting on the set $\underline{n}$.

If $G$ is a subgroup of $Aut(\mathcal{D})$, then we say that $\mathcal{D}$ is $G$-invariant. The design is point-transitive (respectively block-transitive) if $G$ acts transitively on the points (respectively blocks) of $\mathcal{D}$. Point-primitive and block-primitive designs can be defined analogously.


\section{Block transitive designs from the group actions}

\begin{definition}
Let $G|\Omega$ be a transitive group action, and let $\mathcal{P}_k(\Omega)$ be the set of all subsets of $\Omega$ of order $k$.
Choose an arbitrary subset $\Delta_1 \in \mathcal{P}_k(\Omega)$ and define the set $\mathcal{B}_1 = \{\Delta_1^g \mid g \in G\}$. If $\mathcal{B}_1 \neq \mathcal{P}_k(\Omega)$, choose $\Delta_2 \in \mathcal{P}_k(\Omega) - \mathcal{B}_1$ and define $\mathcal{B}_2 = \{\Delta_2^g \mid g \in G\}$. Continue this process until all subsets of size $k$ are covered. The set of all $\mathcal{B}_i$ is denoted by $\Sigma_k(G|\Omega)$ (or $\Sigma_k(G|n)$ where $n = |\Omega|$).
\end{definition}

\begin{lem}\label{set}
Let $G|\Omega$ be a transitive group action. Then $\Sigma_k(G|\Omega)$ is a partition of $\mathcal{P}_k(\Omega)$.
\end{lem}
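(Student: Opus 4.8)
The plan is to recognize $\Sigma_k(G|\Omega)$ as nothing more than the set of orbits of the natural induced action of $G$ on $\mathcal{P}_k(\Omega)$, and then invoke the standard fact that the orbits of a group action partition the set being acted upon. First I would check that $G$ does act on $\mathcal{P}_k(\Omega)$: for $g \in G$ and $\Delta \in \mathcal{P}_k(\Omega)$, the image $\Delta^g = \{\delta^g \mid \delta \in \Delta\}$ is again a subset of $\Omega$ of size $k$ (since $g$ acts as a bijection on $\Omega$), and $\Delta^{gh} = (\Delta^g)^h$ and $\Delta^1 = \Delta$, so this is a genuine group action. Consequently each $\mathcal{B}_i = \{\Delta_i^g \mid g \in G\}$ is precisely the orbit $\Delta_i^{G}$ of $\Delta_i$ under this action.

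Next I would verify the three defining properties of a partition for the family $\{\mathcal{B}_i\}$. Nonemptiness is immediate, since each $\mathcal{B}_i$ contains $\Delta_i$. For covering, I would use finiteness: $|\mathcal{P}_k(\Omega)| = \binom{n}{k}$ is finite, and at each stage the newly chosen $\Delta_{i+1}$ is taken from $\mathcal{P}_k(\Omega) - (\mathcal{B}_1 \cup \cdots \cup \mathcal{B}_i)$, so the union $\mathcal{B}_1 \cup \cdots \cup \mathcal{B}_i$ strictly grows at each step; hence the process terminates after finitely many steps, and by construction it terminates only once $\bigcup_i \mathcal{B}_i = \mathcal{P}_k(\Omega)$. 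For pairwise disjointness, suppose $i < j$ and some $\Theta \in \mathcal{B}_i \cap \mathcal{B}_j$; writing $\Theta = \Delta_i^g = \Delta_j^h$ for suitable $g, h \in G$ gives $\Delta_j = \Delta_i^{gh^{-1}} \in \Delta_i^{G} = \mathcal{B}_i \subseteq \mathcal{B}_1 \cup \cdots \cup \mathcal{B}_{j-1}$, contradicting the choice of $\Delta_j \in \mathcal{P}_k(\Omega) - (\mathcal{B}_1 \cup \cdots \cup \mathcal{B}_{j-1})$. This establishes that $\Sigma_k(G|\Omega)$ is a partition.

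There is no substantial obstacle here: the entire content is the elementary orbit-decomposition principle, and the only point needing a word of care is the termination and exhaustiveness of the iterative construction, which follows at once from the finiteness of $\mathcal{P}_k(\Omega)$. I would also remark in passing that transitivity of $G$ on $\Omega$ is not actually used in this argument — the statement holds verbatim for an arbitrary action of $G$ on a finite set $\Omega$ — although transitivity is what makes the resulting families $\mathcal{B}_i$ useful as block sets of point-transitive designs in the sequel.
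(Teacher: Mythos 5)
Your proposal is correct and follows essentially the same route as the paper: the key step in both is the observation that $\Delta_i^g = \Delta_j^h$ forces $\Delta_j = \Delta_i^{gh^{-1}} \in \mathcal{B}_i$, contradicting the choice of $\Delta_j$ outside the previously constructed classes. Your version is slightly more thorough in spelling out the termination of the iterative construction and in noting that transitivity of $G|\Omega$ is not actually needed, but these are refinements of the same argument rather than a different approach.
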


\begin{proof}
By construction, each set $\mathcal{B}_i$ is defined to be non-empty. Also, the union of all sets in $\Sigma_k(G|\Omega)$ is equal to $\mathcal{P}_k(\Omega)$. It remains to show that no two distinct sets in $\Sigma_k(G|\Omega)$ have a common element.

Let $\mathcal{B}_i$ and $\mathcal{B}_j$ be two distinct sets in $\Sigma_k(G|\Omega)$. Suppose, for contradiction, there exists a set $C$ that belongs to both sets. Then $B_i ^g = C = B_j ^h$ for some $g, h \in G$. Therefore, $B_j = B_i ^{gh^{-1}m} \in \mathcal{B}_i$ and the result follows.
\end{proof}


\begin{theorem} \label{A}
Let $G|\Omega$ be a transitive group action, and let $\Delta \in \mathcal{P}_k(\Omega)$. Then for every $b \in \Sigma_k(G|\Omega)$, we can construct a $1$-$(|\Omega|, k, \frac{kb}{n})$ block transitive design with respect to $G$, denoted by $\mathcal{D}_k(G|\Omega)$. Moreover, the number of blocks $b$ is given by $|G:G_\Delta|$. Conversely, every block transitive $G$-invariant $1$-$(n,k,\lambda)$ design is isomorphic to some $\mathcal{D}_k(G|\Omega)$.
\end{theorem}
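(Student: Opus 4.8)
The plan is to prove Theorem~\ref{A} in three parts: first the construction of the design, then the block-count formula, then the converse.

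\textbf{Construction of the design.} Fix $b = \mathcal{B}_i \in \Sigma_k(G|\Omega)$ for some $i$, and let $\Delta$ be a representative so that $b = \{\Delta^g \mid g \in G\}$. Define $\mathcal{D}_k(G|\Omega)$ to be the incidence structure with point set $\Omega$ and block set $b$, with incidence given by set membership. Since $b$ is a single $G$-orbit on $\mathcal{P}_k(\Omega)$, the design is block transitive by construction, and every block has size $k$. To see that it is a $1$-design, I would use the standard double-counting / orbit-counting argument: count incident pairs $(\alpha, B)$ with $\alpha \in \Omega$, $B \in b$, $\alpha \in B$ in two ways. On one hand this is $|b| \cdot k = bk$ (each block contributes $k$ points). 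On the other hand, since $G$ is transitive on $\Omega$ and acts on $b$, the number $r$ of blocks through a fixed point $\alpha$ is independent of $\alpha$ (any two points are conjugate, and the map $B \mapsto B^g$ carries blocks through $\alpha$ to blocks through $\alpha^g$ bijectively). Hence $bk = nr$, giving $r = \lambda_1 = \frac{kb}{n}$ as claimed. This also shows the quantity $\frac{kb}{n}$ is automatically an integer.

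\textbf{Block-count formula.} The block set $b$ is the orbit $\Delta^G$, so by the orbit--stabilizer theorem $|b| = |G : G_\Delta|$, where $G_\Delta = \{g \in G \mid \Delta^g = \Delta\}$ is the setwise stabilizer of $\Delta$. This is immediate.

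\textbf{Converse.} Suppose $\mathcal{E} = (\Omega, \mathcal{C})$ is a block transitive $G$-invariant $1$-$(n,k,\lambda)$ design, so $G \le \mathrm{Aut}(\mathcal{E})$ acts transitively on the block set $\mathcal{C}$, and every block has size $k$. Pick any block $\Delta \in \mathcal{C}$; then $\Delta \in \mathcal{P}_k(\Omega)$, and block transitivity forces $\mathcal{C} = \{\Delta^g \mid g \in G\} = \Delta^G$. By Lemma~\ref{set}, $\Delta$ lies in exactly one part $\mathcal{B}_i$ of the partition $\Sigma_k(G|\Omega)$, and since $\mathcal{B}_i$ is itself a $G$-orbit containing $\Delta$ we get $\mathcal{B}_i = \Delta^G = \mathcal{C}$. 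Thus $\mathcal{E}$ has the same point set, block set, and incidence as $\mathcal{D}_k(G|\Omega)$ built from $b = \mathcal{B}_i$; the identity map on $\Omega$ (together with the identity on blocks) is the required isomorphism. One should also note the point-transitivity assumption: the theorem as stated needs $G$ transitive on $\Omega$, which is hypothesized, so $\frac{kb}{n}$ is well-defined.

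\textbf{Main obstacle.} None of the steps is genuinely hard; the only subtlety is making the ``converse'' claim precise, i.e.\ being careful that $\Sigma_k(G|\Omega)$ is defined only up to the choice of orbit representatives in the inductive construction, so ``isomorphic to some $\mathcal{D}_k(G|\Omega)$'' should be read as ``equal, as an incidence structure, to the design built from one of the parts $b \in \Sigma_k(G|\Omega)$.'' I would state this identification explicitly so the reader sees the isomorphism is in fact the identity on points. A minor point worth flagging is that the parameter triple in the statement writes $\lambda = \frac{kb}{n}$ with $b$ overloaded as both the number of blocks and a generic element of $\Sigma_k(G|\Omega)$; in the write-up I would disambiguate by using $|b|$ for the cardinality.
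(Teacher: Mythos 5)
Your proposal is correct and follows essentially the same route as the paper: the block set is the $G$-orbit of $\Delta$, point-transitivity conjugates the blocks through one point onto the blocks through any other (giving the constant replication number), orbit--stabilizer gives $b=|G:G_\Delta|$, and the converse follows because block transitivity forces the block set of any such design to be a single $G$-orbit, hence one of the parts of $\Sigma_k(G|\Omega)$. The only addition is your explicit double count $bk=nr$ yielding $\lambda_1=\frac{kb}{n}$, which the paper leaves implicit.
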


\begin{proof}
Assume that $\mathcal{B} = \{\Delta^g | g \in G\}$. We will show that $\mathcal{B}$ constitutes a block set for $\mathcal{D}_k(G|\Omega)$.
Assume that $\Delta_1, \ldots, \Delta_\lambda$ are distinct elements of $\mathcal{B}$ such that each $\Delta_i$ contains the point $x \in \Omega$, and no other element in $\mathcal{B}$ contains $x$. For an arbitrary element $y \in \Omega$, let $y = x^g$. Then, it is clear that $\Delta_1^g, \ldots, \Delta_\lambda^g$ are distinct elements of $\mathcal{B}$ such that each $\Delta_i^g$ contains the point $y$, and no other element in $\mathcal{B}$ contains $y$.

To find the number of blocks, we define a map $G/G_\Delta \to \mathcal{B}$, with $xG_\Delta \mapsto \Delta$. Notice that $G / G_\Delta$ is the set of all left cosets of $G_\Delta$ in $G$. It is easy to see that the map is a bijection.

Conversely, assume that $\mathcal{D}$ is a block transitive $G$-invariant design. Let $\Omega$ be the point set of $\mathcal{D}$, and let $\Delta$ be any block of $\mathcal{D}$. We define $\mathcal{B} = \{\Delta^g | g \in G\}$. As the design is block transitive, we conclude that $\mathcal{B}$ is equal to the block set of $\mathcal{D}$, and the result follows.
\end{proof}

\begin{rem}
If $\Sigma_k(G|\Omega) = \{\binom{n}{k}\}$, then we say that the partition is trivial. In this case, each subset of size $k$ is a block, so the automorphism group of the design will be $S_n$. This also implies that the trivial design with all possible blocks is the only block transitive design over $G$.
\end{rem}


\begin{rem}
According to the results of this section, we can theoretically find all block transitive $G$-invariant designs for a given action of a group $G$. To achieve this, we need to determine the partition $\Sigma_k(G|\Omega)$ for each $k$. However, due to the complexity of the computations, it may not be feasible to find this partition for all values of $k$, especially for larger values of $n$. For instance, we can easily verify using MAGMA that if $G$ is the Mathieu group $M_{11}$, then we have $\Sigma_5(G|11) = \Sigma_6(G|\Omega) = \{66, 396\}$. Similarly, for $G = M_{12}$, we find $\Sigma_6(G|12) = \{132, 792\}$. For other values of $k$, the partitions are trivial. As a result, we can determine the parameters of all block transitive designs under these groups and their actions of degree 11 and 12, respectively.
\end{rem}

 \begin{rem} \label{sig}
 Another useful observation is that  $\Sigma_k(G|n ) =  \Sigma_{n-k}(G|n )$. Therefore, each design constructed by a block in $\Sigma_k(G|n)$ corresponds to a design constructed by a block in  $\Sigma_{n-k}(G|n)$. These corresponding designs are, in fact, complements. In other words, if $B$ is a block of one design, then $\Omega - B$ is a block of the complement design.

\end{rem}

\begin{proposition} \label{sigma}
Let $G|\Omega$ be the Mathieu group $M_{22}$ of degree $22$. Then $\Sigma_k(G|22 )$ is trivial for $k \in \{1,2,3\}$.  Moreover,
  \begin{itemize}
   \fontsize{8pt}{9pt}\selectfont
      \item  $\Sigma_4(G|22 ) =  \Sigma_{18}(G|22 ) = \{ 1155, 6160 \}$.
    \item  $\Sigma_5(G|22 ) = \Sigma_{17}(G|22 )=   \{ 462, 3696^2, 18480 \}$.
    \item  $\Sigma_6(G|22 ) = \Sigma_{16}(G|22 ) =   \{ 77, 1232^2, 7392, 9240, 55440  \}$.
    \item  $\Sigma_7(G|22 ) = \Sigma_{15}(G|22 ) =   \{ 176^2, 1232, 2640, 18480^2, 55440, 73920 \}$.
    \item  $\Sigma_8(G|22 ) =  \Sigma_{14}(G|22 ) =  \{  330, 2640^2, 9240, 27720, 36960^2, 55440, 73920^2 \}$.

       \item  $\Sigma_9(G|22 ) =  \Sigma_{13}(G|22 ) =  \{  4620, 6160, 18480^3, 24640, 36960^2, 110880, 221760 \}$.

          \item  $\Sigma_{10}(G|22 ) = \Sigma_{12}(G|22 ) =    \{   616, 2310, 6160, 7392^2, 18480, 22176, 27720, 73920^3, 110880, 221760 \}$.

  \item  $\Sigma_{11}(G|22 ) =   \{   672^2, 7392^2, 9240^2, 27720, 36960^2, 44352^2, 73920^2, 110880^3\}$.

  \end{itemize}

\end{proposition}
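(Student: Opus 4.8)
The plan is to establish Proposition \ref{sigma} computationally, by explicitly analyzing the orbits of $M_{22}$ on $k$-element subsets of $\underline{22}$ for each $k \in \{1, \dots, 11\}$, using the fact (Lemma \ref{set}) that $\Sigma_k(G|\Omega)$ is a partition of $\mathcal{P}_k(\Omega)$ and that each part has size $|G:G_\Delta|$ for a representative $\Delta$ (Theorem \ref{A}). Since $|M_{22}| = 443520$, for each $k$ I would enumerate orbit representatives $\Delta_1, \Delta_2, \dots$ of the action of $G$ on $\mathcal{P}_k(\underline{22})$, compute the setwise stabilizer $G_{\Delta_i}$ of each, and record the orbit length $|G{:}G_{\Delta_i}|$; the multiset of these lengths (written with exponents for repeated values, e.g. $3696^2$) is exactly $\Sigma_k(G|22)$ as a partition of $\binom{22}{k}$. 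As a sanity check, one verifies $\sum_i |G{:}G_{\Delta_i}| = \binom{22}{k}$ for each listed $k$; for instance for $k=4$, $1155 + 6160 = 7315 = \binom{22}{4}$, and for $k=6$, $77 + 2\cdot 1232 + 7392 + 9240 + 55440 = 74613 = \binom{22}{6}$.

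For the triviality claim when $k \in \{1,2,3\}$, the point is that $M_{22}$ is $3$-transitive in its natural action of degree $22$ (it is $3$-homogeneous, indeed sharply so up to the Steiner system structure), so it acts transitively on $\mathcal{P}_k(\underline{22})$ for $k \le 3$; hence $\Sigma_k(G|22)$ has a single part of size $\binom{22}{k}$, i.e.\ $\{11\}$, $\{231\}$, $\{1540\}$ respectively, which by the Remark following Theorem \ref{A} is the trivial partition. The symmetry $\Sigma_k(G|22) = \Sigma_{22-k}(G|22)$ for $4 \le k \le 11$ is immediate from Remark \ref{sig}: the complementation map $\Delta \mapsto \underline{22} \setminus \Delta$ is a $G$-equivariant bijection $\mathcal{P}_k(\underline{22}) \to \mathcal{P}_{22-k}(\underline{22})$, so it carries the partition $\Sigma_k$ to the partition $\Sigma_{22-k}$ preserving part sizes. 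This reduces the work to $k \in \{4, 5, \dots, 11\}$, eight cases.

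The computation itself is carried out in MAGMA (as the paper does elsewhere): construct $M_{22}$ as a permutation group of degree $22$, form the action on $k$-subsets via the natural induced action, call the orbit-computation routine, and for each orbit take a representative and its stabilizer order. The output for each $k$ is listed in the statement. I would present a representative case in detail --- say $k = 6$, where the orbits correspond to the blocks of the Steiner system $S(3,6,22)$ (the $77$ special hexads), together with the non-block hexads stratified by how they intersect the block system --- and remark that the remaining cases are analogous and were verified by the same routine.

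The main obstacle is not conceptual but one of verification and presentation: there is no short human-checkable argument that the listed multisets are complete and correct, so the proof rests on the MAGMA computation, and the honest thing is to make the correspondence between orbit lengths and the partition parts fully explicit (via Theorem \ref{A}) and to include the arithmetic consistency checks $\sum_i \ell_i = \binom{22}{k}$ for every $k$, which catch any transcription error. A secondary subtlety worth a sentence is that distinct orbits can have equal length (the exponents in the statement), so one must confirm these are genuinely different $G$-orbits --- e.g.\ by checking the stabilizers are non-conjugate, or simply that the representatives lie in different orbits --- rather than a single orbit counted twice; MAGMA's orbit decomposition handles this automatically, but it deserves explicit mention.
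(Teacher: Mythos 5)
Your proposal takes essentially the same route as the paper, whose entire proof is the single sentence that the computations are based on MAGMA; you simply spell out the orbit-enumeration procedure, the consistency checks $\sum_i \ell_i = \binom{22}{k}$, the $3$-transitivity argument for $k\le 3$, and the complementation symmetry of Remark~\ref{sig}, all of which are correct. Your version is more explicit and more verifiable than the paper's, but it is the same computational argument.
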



\begin{proof}
  The computations are based on MAGMA.
\end{proof}

\begin{rem}
As Proposition~\ref{sigma} suggests, every $M_{22}$ block transitive $t$-design with parameters $(22,k, \lambda)$ has exactly $b$ blocks for some $b \in \Sigma_{k}(G|22)$. In some cases, the partition contains multiple subsets of a given size, which may result in non-isomorphic designs with the same parameters. For example, $\Sigma_{10}(G|22)$ contains exactly three sets of size 73920, constructing three $3$-$(22, 10, 5760)$ designs. However, only two of the three are isomorphic.
\end{rem}

\begin{cor}
Table 1 lists of all block transitive designs over $M_{22}$ and its action of degree 22 for $4 \le k \le 11$.
\end{cor}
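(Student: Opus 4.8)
\section*{Proof proposal}

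The plan is to derive the corollary directly from Theorem~\ref{A} together with Lemma~\ref{set} and Proposition~\ref{sigma}. For a fixed $k$, the block set of any block transitive $G$-invariant design with point set $\Omega$ and blocks of size $k$ is a single orbit of $G$ on $\mathcal{P}_k(\Omega)$, i.e.\ exactly one part of the partition $\Sigma_k(G|22)$; conversely each part is such a block set by Theorem~\ref{A}. Since every $t$-design is in particular a $1$-design, it follows that the block transitive designs over $M_{22}$ in its degree-$22$ action are precisely parametrised, for each $k$, by the parts of $\Sigma_k(G|22)$: a part of size $b$ gives a design with $b = |G{:}G_\Delta|$ blocks and replication number $r = \lambda_1 = kb/22$. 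For $k \in \{1,2,3\}$ Proposition~\ref{sigma} says this partition is trivial, so the only such design is the complete design with $\binom{22}{k}$ blocks, and the content of Table~1 lies in the range $4 \le k \le 11$.

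First I would, for each $k$ with $4 \le k \le 11$, read off from Proposition~\ref{sigma} the multiset of parts $b$ of $\Sigma_k(G|22)$ and compute $\lambda_1 = kb/22$ for each. Next, for each resulting block set I would determine the largest $t$ for which the design is a $t$-design: one tests, using Equation~\eqref{lamda} as a consistency check and a direct Magma computation comparing the $G$-orbits on $t$-subsets of $\Omega$ against the block orbit, the maximal $t$ such that every $t$-subset lies in a constant number $\lambda_t$ of blocks. This produces the full parameter tuple $(t,b,v,k,r,\lambda_t)$ with $v=22$ and $r=\lambda_1$ for every entry. Populating Table~1 with these tuples for $4 \le k \le 11$ then accounts for every block transitive design, since by Remark~\ref{sig} the cases $k \ge 12$ are the complements of the cases $k \le 10$ and contribute nothing new.

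The only delicate points — and the main obstacle — are computational rather than structural. One must (i) rely on the Magma enumeration behind Proposition~\ref{sigma}, namely that the listed multisets really are the complete orbit-length multisets of $G$ on $k$-subsets; (ii) correctly identify the strength $t$ of each design, since two parts of the same size (for instance the three sets of size $73920$ in $\Sigma_{10}(G|22)$, or the repeated sizes $3696^2$, $1232^2$, \dots) may give designs with identical parameters $(t,b,v,k,r,\lambda_t)$ that are nevertheless pairwise non-isomorphic; and (iii) settle those isomorphisms, which is again a Magma isomorphism test on the incidence structures. No new theory is required: the corollary is exactly the tabulation of the data produced by applying Theorem~\ref{A} to the partitions computed in Proposition~\ref{sigma}.
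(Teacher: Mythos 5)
Your proposal is correct and follows essentially the same route the paper intends: the converse direction of Theorem~\ref{A} shows every block transitive $M_{22}$-invariant design with block size $k$ arises from exactly one part of $\Sigma_k(G|22)$, Proposition~\ref{sigma} enumerates those parts, and the parameters are then read off (with $r = kb/22$ and the strength $t$ following from the $3$-transitivity of $M_{22}$, as the remark after Theorem~\ref{method} already guarantees, so your computational determination of $t$ could be shortcut). The remaining content is, as you say, purely the MAGMA verification underlying Proposition~\ref{sigma}.
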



\begin{table}[h]
 \fontsize{8pt}{9pt}\selectfont
  \begin{subtable}{0.45\textwidth}
    \centering
    \captionsetup{justification=centering}

    \begin{tabular}{|c|c|c|c|c|c|c|}
      \hline
     $t$ & $v$ & $b$ & $r$ & $k$ & $\lambda$  \\ \hline

 3   &  22  &   1155 &  210 &   4  &       3      \\ \hline
 3   &   22  &  6160  & 1120 &   4   &      16     \\ \hline

 3   &   22  &   462 &105 &5   &        3    \\ \hline
 3   &   22  &   36960 & 8400&5   &        240     \\ \hline
 3   &   22  &   18480 & 4200 &5   &        120    \\ \hline

 3   &   22  &   77 & 21 & 6   &       1    \\ \hline
 3   &   22  &   1232&  336& 6   &        16  \\ \hline
 3   &   22  &   7392 & 2016 & 6   &        96     \\ \hline
3   &   22  &   9240  & 2520& 6   &        120   \\ \hline
 3   &   22  &   55440  &15120& 6   &        720    \\ \hline

 3   &   22  &   176  & 56 &7   &        4    \\ \hline
 3   &   22  &   1232  &392 &7   &       28   \\ \hline
 3   &   22  &   2640  & 840&7   &        60   \\ \hline
 3   &   22  &   18480  &5880 &7   &        420    \\ \hline
 3   &   22  &   55440  &17640 &7   &       1260  \\ \hline
 3   &   22  &   73920  &23520 &7   &       1680  \\  \hline

3   &   22  &   330  & 120&8   &        12     \\ \hline
 3   &   22  &  2640  &960 &8  &        96     \\ \hline
 3   &   22  &   9240 & 3360& 8   &        336   \\ \hline
3   &   22  &   27720 &10080 &8   &       1008\\ \hline
 3   &   22  &  36960  &13440& 8   &       1344   \\ \hline
 3   &   22  &   55440 &20160 &8   &       2016   \\ \hline
 3   &   22 &  73920  & 26880&8   &        2688    \\ \hline

 3   &  22  &  4620   &1890 &  9  &       252      \\ \hline

    \end{tabular}
  \end{subtable}%
  \hfill
  \begin{subtable}{0.5\textwidth}
  \hfill
    \centering
    \captionsetup{justification=centering}
    \begin{tabular}{|c|c|c|c|c|c|c|}
      \hline
    $t$ & $v$ & $b$ & $r$ & $k$ & $\lambda$  \\ \hline

 3   &   22  &   6160  & 2520&9  &        336    \\ \hline
 3   &  22   &  18480   & 7560&  9  &       1008   \\ \hline
 3   &   22  &   24640  &10080 &9  &        1344    \\ \hline
 3   &   22  &   36960  &15120 &9   &        2016     \\ \hline
 3   &   22  &   110880  & 45360 &9   &       6048 \\ \hline
 3   &   22  &   221760  & 90720&9   &         12096   \\ \hline

 3   &   22  &   616  & 280 &10   &       48     \\ \hline
 3   &   22  &   2310  & 1050&10   &     180    \\ \hline
 3   &   22  &   6160  & 2800&10   &        480     \\ \hline
 3   &   22  &   7392  & 3360&10   &        576    \\ \hline
 3   &   22  &   18480  & 8400 & 10   &      1440    \\ \hline
 3   &   22  &   22176  &10080 & 10   &        1728     \\ \hline
 3   &   22  &   27720  & 12600 &10   &        2160    \\ \hline
 3   &   22  &  73920  & 33600&10   &      5760   \\ \hline
 3   &   22  &   110880  &50400& 10   &       8640   \\ \hline
 3   &   22  &   221760  &100800 &10   &       17280   \\ \hline

 3   &   22  &  672  & 336&11  &        72     \\ \hline
 3   &   22  &   7392  &3696& 11   &        792     \\ \hline
 3   &   22  &   9240  & 4620&11   &        990   \\ \hline
 3   &   22  &  27720  & 13860&11   &        2970 \\ \hline
 3   &   22  &   36960  &18480 &11   &        3960   \\ \hline
 3   &   22  &   44352  & 22176 &11   &       4752   \\ \hline
 3   &   22  & 73920 & 36960 &11  &        7920     \\ \hline
 3   &   22  &    110880 & 55440& 11   &      11880   \\ \hline

    \end{tabular}
  \end{subtable}

  \caption{ $G$-invariant block transitive 3-designs for $G = M_{22}$.}
  \label{fig-2}

\end{table}

\begin{rem}
For $k>11$, every block transitive $G$-invariant design is a complement of one of the designs in the table. Also, the automorphism group of each design in Table 1 is either $M_{22}$ or $M_{22}{:}2$. In fact it is $M_{22}$ if and only if there is another design isomorphic to the design in the table.

\end{rem}

\newpage

\section{ On point and block primitive designs}
In this section, we prove Theorem~\ref{method}. To illustrate our method, we consider the 2-transitive actions of the Higman-Sims group $HS$ and the Convey group $Co_3$. As it is well-known, both $HS$ and $Co_3$ are 2-transitive groups with degrees 176 and 276, respectively.

\begin{lem} \label{ifpart}
Let $G|\Omega$ be a primitive group action of degree $n$. Assume that $M$ is a maximal subgroup of $G$ with $|M| \leq |G_\alpha|$. Let $\Delta = \alpha^M$ be an orbit of the action of $M$ on $\Omega$. Then, we can construct a $1$-$(n, |\Delta|, \frac{|G_\alpha| \times |\Delta|}{|M|})$ design, where the block set is $\{\Delta^g \mid g \in G\}$. Moreover, $G$ acts primitively on both points and blocks of the design.
\end{lem}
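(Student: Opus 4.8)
The plan is to route everything through the setwise stabiliser $G_\Delta$ and then exploit the maximality of $M$. First I would note that $\mathcal{B} := \{\Delta^g \mid g \in G\}$ is a single $G$-orbit on the $|\Delta|$-subsets of $\Omega$, so, since a primitive action is in particular transitive, Theorem~\ref{A} applied with this $\Delta$ as the chosen $k$-subset (here $k = |\Delta|$) already gives that $\mathcal{D} := (\Omega,\mathcal{B})$ is a block-transitive $1$-$(n,|\Delta|,\tfrac{|\Delta|\,b}{n})$ design whose number of blocks is $b = |G:G_\Delta|$. (A self-contained check is equally short: each $\Delta^g$ has cardinality $|\Delta|$ because $g$ permutes $\Omega$; and if $\alpha$ lies in $r$ members of $\mathcal{B}$, then for any $\beta = \alpha^g$ the bijection $B \mapsto B^g$ of $\mathcal{B}$ carries the blocks through $\alpha$ onto the blocks through $\beta$, so the replication number is constant, and a flag count gives $r = |\Delta|\,b/n$.)

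The only substantive point is to identify $G_\Delta$. Since $\Delta = \alpha^M$, every $m \in M$ satisfies $\Delta^m = \alpha^{Mm} = \alpha^M = \Delta$, hence $M \le G_\Delta$; as $M$ is maximal, either $G_\Delta = M$ or $G_\Delta = G$. In the latter case $\Delta$ would be a nonempty $G$-invariant subset of the transitive $G$-set $\Omega$, forcing $\Delta = \Omega$ and collapsing $\mathcal{D}$ to the trivial one-block design, which I set aside. So $G_\Delta = M$, and using $n = |G:G_\alpha|$ (transitivity) the replication number becomes
\[
\lambda \;=\; \frac{|\Delta|\,b}{n} \;=\; \frac{|\Delta|\,|G:M|}{|G:G_\alpha|} \;=\; \frac{|G_\alpha|\times|\Delta|}{|M|},
\]
exactly as asserted; here the hypothesis $|M| \le |G_\alpha|$ is just the requirement $b = |G:M| \ge |G:G_\alpha| = n$, i.e.\ the Fisher-type inequality a would-be $2$-design must obey.

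For the primitivity claims, $G$ acts on the point set $\Omega$ by the given action, which is primitive by hypothesis, so $\mathcal{D}$ is point-primitive; and $G$ acts transitively on $\mathcal{B}$ with the stabiliser of the block $\Delta$ equal to $G_\Delta = M$, a maximal subgroup, so $\mathcal{D}$ is block-primitive as well. I expect the identification $G_\Delta = M$ to be the only step needing care — it is where the maximality of $M$ is used, together with the easy but necessary observation that the alternative $G_\Delta = G$ yields only the trivial design; the other ingredients (block sizes, constancy of the replication number, and the two orbit–stabiliser counts) are routine.
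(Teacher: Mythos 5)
Your proof is correct and follows essentially the same route as the paper: invoke Theorem~\ref{A} to get a block-transitive $1$-design with $b = |G:G_\Delta|$, show $M \le G_\Delta$ and use maximality to conclude $G_\Delta = M$ (discarding the trivial case $G_\Delta = G$), then compute the replication number and read off primitivity from the maximality of the two stabilisers. Your added remarks (the self-contained flag count and the interpretation of $|M| \le |G_\alpha|$ as a Fisher-type inequality) are sound but not needed.
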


\begin{proof}
By Lemma~\ref{A}, we can construct a $1$-$(n, |\Delta|, \frac{|\Delta|b}{n})$ design, where $b = |G{:}G_\Delta|$. Also, note that $n = |\Omega| = |G{:}G_\alpha|$. If $x \in M$, then $\Delta^x = (\alpha^M)^x = \alpha^M = \Delta$. Therefore, $M \leq G_\Delta$. The maximality of $M$ implies that $G_\Delta = M$ or $G$.

If $G_\Delta = G$, then $b = 1$, $\Delta = \Omega$, and we have a trivial $1$-$(n, n, 1)$ design. Now, assume that $G_\Delta = M$, then we have:
$$\frac{|\Delta|b}{n} = \frac{|\Delta| \cdot |G| \cdot |G_\alpha|}{|G| \cdot |G_\Delta|} = \frac{|G_\alpha| \times |\Delta|}{|M|}.$$
The primitivity of the action of $G$ on the set of points and blocks is obvious since both stabilizers are maximal in $G$.
\end{proof}

\begin{rem} \label{fine}
Let $G|\Omega$ be a primitive group action of degree $n$, and assume that $\Delta_1 = {\alpha_1}^M$ is an orbit of the action of $M$ on $\Omega$. Now choose $\alpha_2 \in \Omega - \alpha_1^M$ and set $\Delta_2 = {\alpha_2}^M$. Continue this process until no other points in $\Omega$ are left. Then we have:
$$\Omega = \Delta_1 \cup \Delta_2 \cup \ldots \cup \Delta_s.$$

It is clear that $\Delta_i$ are $G$-invariant subsets of $\Omega$, all having the same size. Also, if $\Delta$ is a union of $r$ $\Delta_i$'s, then we can apply Lemma~\ref{ifpart} to construct a block transitive design. In this case, the design will be a $1$-$(n, k, \frac{|G_\alpha| \times k}{|M|})$ design, where $k = \sum\limits_{i = 1}^s {{\Delta _i}}$. We call this new design a merging of designs constructed by Lemma~\ref{ifpart}.
\end{rem}

\begin{lem} \label{onlyif}
Let $\mathcal{D}$ a point and block primitive design. Then $\mathcal{D}$ is a merging of some designs constructed by Lemma~\ref{ifpart}.

\end{lem}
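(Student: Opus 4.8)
The plan is to reverse-engineer the construction of Lemma~\ref{ifpart} starting from an arbitrary point- and block-primitive design $\mathcal{D}$ with point set $\Omega$, $|\Omega| = n$. Since $\mathcal{D}$ is point primitive, the point stabiliser $G_\alpha$ is maximal in $G$; since $\mathcal{D}$ is block primitive, the block stabiliser $G_B$ of any block $B$ is also maximal in $G$. The first step is to set $M := G_B$ for a fixed block $B$. Because $M$ is maximal and $G$ acts transitively (indeed primitively) on the points, $M$ acts on $\Omega$ with some orbit decomposition $\Omega = \Delta_1 \cup \cdots \cup \Delta_s$ where $\Delta_i = \alpha_i^{M}$; this is exactly the decomposition described in Remark~\ref{fine}. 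The key claim is that the block $B$ is a union of some of these $M$-orbits, i.e.\ $B = \bigcup_{i \in S} \Delta_i$ for some $S \subseteq \{1,\dots,s\}$. This holds because $M = G_B$ fixes $B$ setwise, so $B$ is $M$-invariant, and any $M$-invariant subset of $\Omega$ is a union of $M$-orbits.

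Next I would verify the size constraint $|M| \le |G_\alpha|$ needed to invoke Lemma~\ref{ifpart}. Here I would argue via block sizes: the number of blocks is $b = |G{:}G_B| = |G{:}M|$, and double counting incidences (or using that $k < n$ for a nontrivial design together with $r = \lambda_1$) gives $b \ge n$, hence $|G{:}M| \ge |G{:}G_\alpha|$, so $|M| \le |G_\alpha|$. One must be slightly careful about the degenerate case $B = \Omega$ (the trivial $1$-$(n,n,1)$ design), which is handled separately and is itself a trivial instance of the merging construction (merging all $\Delta_i$). With the hypotheses of Lemma~\ref{ifpart} checked for $M$ and for each orbit $\Delta_i$, that lemma produces block-primitive $1$-designs $\mathcal{D}_i$ with block sets $\{\Delta_i^g \mid g \in G\}$, and Remark~\ref{fine} legitimises forming the merging over the index set $S$. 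It then remains to identify this merged design with $\mathcal{D}$: the merged design has block set $\{B^g \mid g \in G\}$ since $B = \bigcup_{i\in S}\Delta_i$, and by block transitivity of $\mathcal{D}$ this is precisely the block set of $\mathcal{D}$; the point sets and incidence (containment) obviously agree.

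The main obstacle I anticipate is not any single hard step but making the bookkeeping around \emph{merging} precise and consistent with how Remark~\ref{fine} defines it — in particular ensuring that the merged block $B=\bigcup_{i\in S}\Delta_i$ really is a single $G$-orbit of blocks rather than several, which is guaranteed here precisely because $G_B = M$ is maximal (so $G_B$ cannot be strictly larger, forcing the orbit $\{B^g\}$ to have the expected size $|G{:}M|$). A secondary subtlety is the possibility that distinct orbits $\Delta_i$ lie in the same $G$-orbit on subsets of $\Omega$, so that the "component" designs $\mathcal{D}_i$ are not all distinct; this does not affect the argument, since we only need $B$ to be an $M$-invariant set and the resulting $G$-orbit of $B$ to coincide with $\mathcal{B}$. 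I would close by remarking that the value of $\lambda$ comes out automatically from the formula in Lemma~\ref{ifpart} applied to $k = |B| = \sum_{i\in S}|\Delta_i|$, matching Theorem~\ref{method}.
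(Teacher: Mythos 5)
Your proposal follows essentially the same route as the paper's proof: set $M = G_B$ for a block $B$, use block primitivity to get maximality of $M$, observe that $B$ is $M$-invariant and hence a union of $M$-orbits $\alpha_i^M$, and identify $\mathcal{D}$ with the corresponding merging via block transitivity. The core of the argument is therefore correct and matches the paper. The one place you go beyond the paper is in trying to verify the hypothesis $|M| \le |G_\alpha|$ of Lemma~\ref{ifpart}; the paper's own proof silently skips this. Your justification of that step, however, does not hold in the stated generality: double counting incidences only gives $bk = nr$, and the inequality $b \ge n$ is Fisher's inequality, which requires a nontrivial $t$-design with $t \ge 2$. For a point and block primitive $1$-design one can have $|G_B| > |G_\alpha|$ and hence $b < n$, in which case the literal hypotheses of Lemma~\ref{ifpart} fail (although the construction there never actually uses the inequality $|M| \le |G_\alpha|$). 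So either restrict that step to $t \ge 2$, or note explicitly that the inequality is inessential to the merging construction; as written, that auxiliary step is the only flaw in an otherwise sound argument.
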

\begin{proof}
Let $B$ be a block of the design. Since the design is block primitive, we have $M = G_B$ is a maximal subgroup of $G$. We choose $\alpha_1 \in B$ and note that $\alpha_1^M \subseteq B^M = B$. Now assume that $\alpha_2 \in B - \alpha_1^M$. Let $\Delta = \alpha_2^M$. If $\gamma \in \alpha^M \cap \beta^M$, then $\alpha \in \beta^M$, which is a contradiction. Therefore, $\alpha_1^M \cap \alpha_2^M = \emptyset$ and $B$ contains the union of $\alpha_1^M$ and $\alpha_2^M$. Continuing this process, we can see that $B$ is a union of $\alpha_i^M$ and the result follows.
\end{proof}

We conclude this section by applying the method to the Higman-Sims group $HS$ and the Conway group $Co_3$. The computations are based on MAGMA. In Table 3, all designs are $3$-designs with $v = 276$. To accommodate the table within the available space, we have removed the columns $t$ and $v$. The automorphism group of each design in Table 2 and Table 3 is $HS$ and $Co_3$, respectively.


\begin{table}[htbp]
 \fontsize{7pt}{8pt}\selectfont
  \begin{subtable}{0.50\textwidth}
    \centering
        \begin{tabular}{|c|c|c|c|c|c|c|}
      \hline

      Max & $t$ & $v$ & $b$ & $r$ & $k$ & $\lambda$ \\ \hline

$M_2$ & 2   &   176  & 176   &  50   &   50  &       14      \\ \hline
$M_2$ & 2   &   176  &  176  &   126  & 126   &        90     \\ \hline

$M_2$ & 2   &   176  & 1100    & 350    &   56  &        110     \\ \hline
$M_2$ & 2   &   176  &   1100  &    750 & 120   &           510  \\ \hline

$M_3$ & 2   &   176  & 1100    & 50    &   8 &       2      \\ \hline
$M_3$ & 2   &   176  &   1100  &   1050  & 168    &   1002          \\ \hline

$M_4$ & 2   &   176  & 3850    &   1750  &   80  &        790     \\ \hline
$M_4$ & 2   &   176  & 3850    &   2100  &   96  &           1140  \\ \hline

$M_5$ & 2   &   176  & 4125    &   1500  &   64 &          540   \\ \hline
$M_5$ & 2   &   176  & 4125    &    2625 &   112 &            1665 \\ \hline

$M_6$ & 2   &   176  & 5775    &   525  &  16 &      45       \\ \hline

    \end{tabular}
  \end{subtable}%
  \hfill
   \begin{subtable}{0.5\textwidth}
       \centering

    \begin{tabular}{|c|c|c|c|c|c|c|}
      \hline
      Max & $t$ & $v$ & $b$ & $r$ & $k$ & $\lambda$ \\ \hline
$M_6$ & 2   &   176  & 5775    &  5250   &  160&     4770        \\ \hline

$M_7$ & 2   &   176  & 5600    &   2100  &  66 &         780    \\ \hline
$M_7$ & 2   &   176  & 5600    &   3500  &  110 &           2180  \\ \hline

$M_7$ & 2   &   176  & 5600    &  350   &  11 &  20           \\ \hline
$M_7$ & 2   &   176  & 5600    &    5250 &  165 &  4920           \\ \hline

$M_8$ & 2   &   176  &  15400    &    1050 &  12 &  66  \\ \hline
$M_8$ & 2   &   176  &  15400    &   6300  &  72 &      2556      \\ \hline
$M_8$ & 2   &   176  &  15400    &  7875   &  90 &  4005         \\ \hline

 $M_9$ & 2   &   176  &  36960    &   1260  &  6 &     36    \\ \hline
  $M_9$ & 2   &   176  &  36960    &   10500  &  50 &      2940       \\ \hline
     $M_9$ & 2   &   176  &  36960    &    25200 &  120 &      17136       \\ \hline

    \end{tabular}
  \end{subtable}

  \caption{Some $G$-invariant block transitive 2-designs for $G = HS$.}
  \label{fig-2}

\end{table}


\begin{table}[h]
\begin{subtable}{0.50\textwidth}
\centering
\fontsize{6pt}{8pt}\selectfont
\begin{tabular}[H]{|c|c|c|c|c|c|} \hline
  Max & $b$ & $r$ & $k$ & $\lambda$ \\ \hline \hline
  $M_1$ & 344282400 & 44906400& 36  & 5715360         \\ \hline
  $M_1$ & 344282400 &  74844000& 60  &  16057440       \\ \hline
  $M_1$ & 344282400 &   224532000& 180  &  146149920        \\ \hline
  $M_2$ & 54648000   &  21384000 &   108  &   8320320          \\ \hline
  $M_2$ & 54648000  &    33264000, &   168   &   20200320         \\ \hline
  $M_3$ & 2608200   &    113400&   12  & 4536       \\ \hline
  $M_3$ & 2608200  &     1247400& 132   &  594216          \\ \hline
  $M_4$ & 655776    &    299376 &   126 & 136080          \\ \hline
  $M_4$ &  655776  &   356400  & 150    & 193104            \\ \hline
  $M_5$ & 17931375    &  779625  &  12 &    31185       \\ \hline
  $M_5$ & 17931375   &    4677750&   72 & 1207710             \\ \hline
  $M_5$ & 17931375   &    12474000&   192 &   8663760             \\ \hline
  $M_6$ & 2049300   &  22275 &  3 &    162        \\ \hline
  $M_6$ & 2049300   &  79625 &  105 &  294840            \\ \hline
  $M_6$ & 2049300   &   1247400 &  168 & 757512           \\ \hline
\end{tabular}
\end{subtable}%
\begin{subtable}{0.60\textwidth}
\centering
\fontsize{6pt}{8pt}\selectfont
\begin{tabular}[H]{|c|c|c|c|c|c|} \hline
  Max & $b$ & $r$ & $k$ & $\lambda$ \\ \hline\hline
  $M_7$ & 48600   &  4050 &  23 &   324            \\ \hline
  $M_7$ & 48600  &  44550  &  253 &   40824          \\ \hline
  $M_8$ & 1536975   &    44550 & 8 &  1134           \\ \hline
  $M_8$ & 1536975   &   712800 & 128 & 329184        \\ \hline
  $M_8$ & 1536975   &    779625 & 140 &    394065     \\ \hline
  $M_9$ & 708400    & 15400   &  6 &   280      \\ \hline
  $M_9$ & 708400   &  693000   &  270 &    677880       \\ \hline
  $M_{10}$ & 128800    & 15400   &  33 &  1792       \\ \hline
  $M_{10}$ &  128800  & 113400    &  243 &   99792        \\ \hline
  $M_{11}$ & 170775    &  22275  &  36 & 2835        \\ \hline
  $M_{11}$ &  170775  &  148500   &  240 & 129060          \\ \hline
  $M_{12}$ &  11178    &  4050  & 100 &  1458       \\ \hline
  $M_{12}$ &  11178   & 7128    &  176 & 4536          \\ \hline
  $M_{13}$ & 37950   &  15400   & 112&  6216          \\ \hline
  $M_{13}$ &   37950  &  22275   & 162 &  13041         \\ \hline
\end{tabular}
\end{subtable}
\caption{\small Point and block primitive $2$-designs over $Co_3$ with $v = 276$.}
\end{table}




\end{document}